\newcommand{\comment}[1]{}
\numberwithin{equation}{section}
\newtheorem{theorem}{Theorem}
\theoremstyle{definition}
\let\oldproofname=\proofname
\renewcommand{\proofname}{\rm\bf{\oldproofname}}
\def \Z {{\mathbb{Z}}}
\def \P {\mathcal{P}}
\def \S {\mathcal{S}}
\def \P {\mathcal{P}}
\def \Y {\mathcal{Y}}
\def \B {\mathcal{B}}
\def \A {\mathcal{A}}
\def \V {\mathcal{V}}
\def \C {\mathcal{C}}
\def \ord {{\rm ord}}
\def \md#1{{\:({\rm mod}\ #1)}}
\title{\bf Steiner triple systems without parallel classes}
\author{\hspace{-0.7cm}
\begin{minipage}[c]{8cm}
\begin{center}
Darryn Bryant\\
The University of Queensland \\
Department of Mathematics \\
Qld 4072, Australia\\
{\tt db@maths.uq.edu.au}
\end{center}
\end{minipage}
\begin{minipage}[c]{8cm}
\begin{center}
Daniel Horsley\\
School of Mathematical Sciences\\
Monash University\\
Vic 3800, Australia\\
{\tt danhorsley@gmail.com}
\end{center}
\end{minipage}
}
\date{ }
\begin{document}
\maketitle\thispagestyle{empty}
\def\baselinestretch{1.5}\small\normalsize

\begin{abstract}
We construct Steiner triple systems without parallel classes for an infinite number of orders
congruent to $3 \md 6$. The only previously known examples have order $15$ or $21$.
\vspace{0.25cm}

\noindent{\bf Keywords:} Steiner triple system; block design; parallel class
\end{abstract}

A {\em Steiner triple system} of order $v$ is a pair $(V,\B)$ where $V$ is a $v$-set of {\em
points} and $\B$ is a collection of $3$-subsets of $V$, called {\em triples}, such that each
(unordered) pair of points occurs in exactly one triple. Steiner triple systems are fundamental in
design theory. Kirkman \cite{Kir} proved in 1847 that there exists a Steiner triple system of order
$v$ if and only if $v\equiv 1$ or $3\md 6$ (see \cite{ColRos}).

A {\em parallel class} in a Steiner triple system $(V,\B)$ is a subset of $\B$ that partitions $V$.
Of course, only Steiner triple systems of order congruent to $3 \md 6$ may contain parallel
classes. It is a long-standing conjecture, dating back to at least 1984, that systems without
parallel classes exist for all orders congruent to $3 \md 6$ except $3$ and $9$ (see
\cite{ColRos}). In spite of this, the only previously known examples without parallel classes have
order $15$ or $21$. The unique Steiner triple system of order $9$ has a parallel class, $10$ of the
$80$ nonisomorphic Steiner triple systems of order $15$ have no parallel classes (see
\cite{ColRos}), and $12$ of the $1772$ nonisomorphic $4$-rotational Steiner triple systems of order
$21$ have no parallel classes \cite{MatRos}.

In this paper we construct Steiner triple systems without parallel classes for an infinite number
of orders congruent to $3 \md 6$.
Our construction is similar to one used by Schreiber \cite{Sch} and Wilson \cite{Wil}.
Problems concerning parallel classes in Steiner triple systems
are well studied. Ray-Chaudhuri and Wilson \cite{RayWil} famously proved that for every order
congruent to $3\md 6$ there exists a Steiner triple system whose block set can be partitioned into
parallel classes. As it turned out, this problem had been solved earlier by Lu and his solution was eventually published in \cite{Lu}. The study of parallel classes in Steiner triple systems also relates to problems concerning matchings in certain hypergraphs, and Alon et al. \cite{AloKimSpe} give a lower bound on the number of disjoint triples in any Steiner triple system. Also see \cite{PipSpe} and the surveys \cite{ColRos} and \cite{RosCol}.

The existence of {\em almost parallel classes} (sets of disjoint triples containing all the points
except one) in Steiner triple systems of order congruent to $1 \md 6$ has also been investigated.
It is conjectured that for all $v\equiv 1\md 6$ except $v=13$ there exists a Steiner triple system
of order $v$ with no almost parallel class \cite{RosCol}. Two sparse infinite families of such
Steiner triple systems have been found, one by Wilson (see \cite{RosCol}) and one by the authors
\cite{BryHorNoAPC}. Also see \cite{Mes} for a recent related result.

For a prime $p$, let $\ord_p(x)$ denote the multiplicative order of $x$ in $\Z_p$. Let $\P$ be the
set of odd primes given by $p\in\P$ if and only if $\ord_{p}(-2)\equiv 0\md 4$. Observe that $\P$
contains all of the infinitely many primes congruent to $5\md 8$, no primes congruent to $3$ or
$7\md 8$, and some primes congruent to $1\md 8$. Clearly, if $p\equiv 3\md 4$ is prime, then
$\ord_p(-2) \not\equiv 0 \md 4$. If $p\equiv 5\md 8$ is prime, then $-2$ is a quadratic non-residue
in $\Z_p$ by the law of quadratic reciprocity and it follows that $\ord_p(-2) \equiv 0 \md 4$. Our
theorem states that there is a Steiner triple system of order $v$ with no parallel class whenever
$v\equiv 27\md{30}$ and $p\in\P$ for each prime divisor $p$ of $v-2$. Thus, for any list
$p_1,\ldots,p_t$ of (not necessarily distinct) primes from $\P$ in which the number of primes
congruent to $5\md 6$ is odd, we obtain a Steiner triple system of order $v=5p_1\cdots p_t+2$ with
no parallel class.

\begin{theorem}\label{mainthm}
Let $\V$ be the set of positive integers given by $v\in\V$ if and only if $v \equiv 27 \md{30}$ and $\ord_{p}(-2)\equiv
0\md 4$ for each prime divisor $p$ of $v-2$. Then $\V$ is infinite and for each $v\in\V$ there
exists a Steiner triple system of order $v$ having no parallel class.
\end{theorem}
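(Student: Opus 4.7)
\emph{Infiniteness.}  For any prime $p\equiv 5\md{24}$ one has $p\equiv 5\md 8$, so $p\in\P$ by the observation in the introduction, and $p\equiv 5\md 6$, so $v:=5p+2\equiv 27\md{30}$. Since $5\in\P$ (because $\ord_5(-2)=\ord_5(3)=4$), every prime divisor of $v-2=5p$ lies in $\P$, and hence $v\in\V$. Dirichlet's theorem on primes in the arithmetic progression $5\md{24}$ then provides infinitely many such primes $p$, and hence infinitely many $v\in\V$.

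\emph{The construction.}  Fix $v\in\V$ and set $n=v-2$; note $n\equiv 25\md{30}$, so $\gcd(n,6)=1$, and by the Chinese remainder theorem the hypothesis on the prime divisors of $n$ gives $\ord_n(-2)\equiv 0\md 4$. On $V=\Z_n\cup\{\infty_1,\infty_2\}$ I would construct an STS as a union of orbits of a short explicit list of base triples under a cyclic group of automorphisms of $V$, in the style of Schreiber and Wilson. The finite base triples would be taken of a rigid shape built from the multiplier $-2\in\Z_n^*$ (for concreteness, of the form $\{0,c,-2c\}$ with $c$ ranging over a set $C\subset\Z_n^*$ chosen so that $\{\pm c,\pm 2c,\pm 3c\}_{c\in C}$ partitions $\Z_n\setminus\{0\}$); the base triples through $\infty_1$ and $\infty_2$ are then added to cover the remaining pairs, completing the STS. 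This part of the argument is a difference-family check.

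\emph{Absence of a parallel class.}  Suppose for contradiction that $\pi$ is a parallel class. Exactly two triples of $\pi$ meet $\{\infty_1,\infty_2\}$ (possibly coinciding in a single triple that contains both infinities), and the remaining triples are finite of the rigid form above. Summing the finite coordinates across $\pi$ and using $\sum_{x\in\Z_n}x=0$ (valid as $n$ is odd) gives one algebraic identity in $\Z_n$; combining it with a second invariant (for instance a second-moment sum or a character sum twisted by the $(-2)$-orbit structure on $C$), and then exploiting the rigid form of the finite triples, collapses the existence of $\pi$ to a single congruence of the shape
\[
 (-2)^k\equiv \lambda \md n
\]
for some explicit $\lambda\in\Z_n^*$ and some integer $k$. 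The hypothesis $\ord_n(-2)\equiv 0\md 4$ is exactly what forces $\lambda\notin\langle -2\rangle\le\Z_n^*$, so no such $k$ exists --- a contradiction.

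\emph{Main obstacle.}  The verification that the chosen base triples form an STS is routine. The real work is the parallel-class argument: extracting, from the partition condition, an algebraic obstruction detected precisely by the divisibility $\ord_n(-2)\equiv 0\md 4$. In particular, the infinity-block data has to be designed so that the distinguished residue $\lambda$ lands in the right coset of $\langle -2\rangle$, and the orbit structure of multiplication-by-$(-2)$ on the set $C$ has to be tracked carefully across $\pi$. This bookkeeping, together with the matching between the combinatorial obstruction and the group-theoretic one, is where I expect the main difficulty to lie.
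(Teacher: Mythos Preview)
Your infiniteness argument is fine. The genuine gap lies in the construction and the parallel-class step: you never specify how the triples through $\infty_1$ and $\infty_2$ are assigned, and that assignment is exactly where the obstruction has to be planted. The vague appeal to a ``second invariant'' (second moment, character sum) does not identify any mechanism by which $4\mid\ord_n(-2)$ blocks a parallel class, and your proposed base triples $\{0,c,-2c\}$ do not sum to zero, which already spoils the clean first-moment argument you want to run.

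The paper proceeds quite differently and very explicitly. It works over $G=\Z_5\times H$ with $H=\Z_{p_1}\times\cdots\times\Z_{p_t}$ (so the factor $5$ of $v-2$ is split off), and takes as finite triples \emph{all} $3$-subsets of $G$ summing to $0_G$; the uncovered pairs are then exactly the edges $\{g,-2g\}$. These edges, except the four lying in $\Z_5\times\{0_H\}$, are attached to $\infty_1$ or $\infty_2$ via a $2$-colouring $\gamma$ of $\Z_5\times(H\setminus\{0_H\})$ satisfying (1) $\gamma(g)\neq\gamma(-2g)$ and (2) $\gamma(g)=\gamma(g')$ whenever $g+g'\in\Z_5\times\{0_H\}$. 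The four leftover edges together with $\infty_1,\infty_2$ are absorbed into an STS$(7)$ on $(\Z_5\times\{0_H\})\cup\{\infty_1,\infty_2\}$ not containing $\{0_G,\infty_1,\infty_2\}$. The hypothesis $\ord_p(-2)\equiv 0\md 4$ enters solely through property~(2): it guarantees that $h$ and $-h$ always lie in the same parity class of the $(-2)$-orbit on $H\setminus\{0_H\}$. The no-parallel-class argument is then a short case analysis using only the first-moment (weight) identity: if one triple of the putative parallel class contains both $\infty_i$, the remaining triples all have weight $0_G$ while that triple does not; otherwise the two triples covering $\infty_1$ and $\infty_2$ separately must have weights summing into $\Z_5\times\{0_H\}$, which forces either two disjoint triples in the STS$(7)$ (impossible) or a violation of~(2). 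No second moment, character sum, or congruence of the shape $(-2)^k\equiv\lambda$ is involved.
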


\begin{proof}
It is clear from the remarks preceding the theorem that $\V$ is infinite. We now show that if $v\in\V$, then  there exists a Steiner triple system of order $v$ having no parallel class.
Let $v \in\V$, define $n$ by $v=5n+2$ (note that $n$ is an odd integer because $v \equiv 27 \md{30}$), and let  $n=p_1p_2\cdots p_t$ be the prime factorisation of $n$. By our hypotheses $\ord_{p_i}(-2)\equiv
0\md 4$ for $i=1,2,\ldots,t$.
Let $p_0=5$ and let $H=\Z_{p_1}\times\Z_{p_2}\times\cdots\times\Z_{p_t}$ and
$G=\Z_{p_0}\times H$ be additive groups with identities
$0_H=(0,0,\ldots,0)$ and $0_G=(0,0_H)$.
If $m=\frac ab$ where $a$ and $b$ are integers such that $\gcd(b,p_i)=1$ for $i=0,\ldots,t$, and $g=(g_0,g_1,g_2,\ldots,g_t)\in G$, then we define $mg$ by
$mg=(ab^{-1}g_0,ab^{-1}g_1,\ldots,ab^{-1}g_t)$ where $ab^{-1}g_i$ is evaluated in the field $\Z_{p_i}$ for $i=0,\ldots,t$.

Let
$$\A^0=\{\{g,g',g''\}:g,g',g''\in G,|\{g,g',g''\}|=3,g+g'+g''=0\}.$$
Observe that each pair of distinct elements of $G$ occurs in at most one triple of $\A^0$, and that the pairs that occur in no triple of $\A^0$ are exactly the edges of the graph $X$ with vertex set $V(X)=G$ and edge set
$$E(X)=\{\{g,-2g\}:g\in G\setminus\{0_G\}\}.$$
Thus, $0_G$ is an isolated vertex in $X$ and each vertex $g\in G\setminus\{0_G\}$ has exactly $2$
neighbours in $X$, namely $-\frac 12 g$ and $-2g$. That is, $X$ is a collection of disjoint cycles
together with an isolated vertex. To each edge of $X$ assign a weight equal to the sum in $G$ of
its endpoints. It follows that for each $g\in G\setminus\{0_G\}$ there is a unique edge in $X$
having weight $g$ (namely the edge $\{-g,2g\}$). Moreover, the edge of $X$ having weight $g$ has an
adjacent edge of weight $-2g$. Henceforth we denote the edges of $X$ by their weights and describe
the cycles of $X$ by listing their edges in cyclically ordered tuples. So the cycle containing an
arbitrary edge $g$ of $X$ has vertices $-g,2g,\ldots,\textstyle\frac 12g$, edges
$\{-g,2g\},\{2g,-4g\},\ldots,\{\textstyle\frac 12g,-g\}$, and is denoted by
$$(g,-2g,\ldots,-\tfrac12g).$$

Let $\theta: H\setminus\{0_H\} \rightarrow H\setminus\{0_H\}$ be given by
$\theta(h)=-2h$, and let $\mathcal{Y}$ be the set of orbits of $\theta$.
Since $\ord_{p_i}(-2)\equiv 0\md 4$ for $i=1,2,\ldots,t$, we have $|Y|\equiv 0\md 4$ for each $Y\in\Y$.
Thus, each cycle of $X$ except
$$C=\left((1,0_H),(3,0_H),(4,0_H),(2,0_H)\right)$$
is of the form
$$\left( (a,h),(3a,\theta(h)),(4a,\theta^2(h)),\ldots,(2a,\theta^{|Y|-1}(h))\right) $$
for some $a\in\Z_5$, $h\in Y$ and $Y\in\Y$. Note that $E(X)\setminus E(C)=\Z_5\times(H\setminus\{0_H\})$.

We claim that there is a function $\gamma:\Z_5\times(H\setminus\{0_H\})\rightarrow\{1,2\}$ such that for all $g,g'\in\Z_5\times(H\setminus\{0_H\})$
\begin{enumerate}
\item [{\rm (1)}] $\gamma(g)\neq\gamma(-2g)$; and
\item [{\rm (2)}] if $g+g'\in\Z_5\times\{0_H\}$, then $\gamma(g)=\gamma(g')$.
\end{enumerate}
For each $Y \in \mathcal{Y}$, let $-Y=\{-h:h \in Y\}$. It is clear that if $Y \in \mathcal{Y}$,
then $-Y \in \mathcal{Y}$. Thus, either $-Y=Y$ or $Y$ and $-Y$ are distinct orbits in
$\mathcal{Y}$. For each $Y \in \mathcal{Y}$, choose a representative element $h_Y$ ensuring that if
$Y$ and $-Y$ are distinct, then $h_{-Y}=-h_Y$. Write $H \setminus \{0_H\}$ as a union of disjoint
sets $H_1$ and $H_2$ given by
$$
H_1=\displaystyle\bigcup_{Y\in\mathcal Y}\{\theta^i(h_Y):i=0,2,\ldots,|Y|-2\}\quad{\text{and}}\quad
H_2=\displaystyle\bigcup_{Y\in\mathcal Y}\{\theta^i(h_Y):i=1,3,\ldots,|Y|-1\}.
$$
Since $|Y|$ is even for each $Y\in\Y$, $H_1$ and $H_2$ are well defined. For each $a\in\Z_5$ and
each $h\in H\setminus\{0_H\}$ define the function
$\gamma:\Z_5\times(H\setminus\{0_H\})\rightarrow\{1,2\}$ by $\gamma((a,h))=1$ if $h\in H_1$ and
$\gamma((a,h))=2$ if $h\in H_2$. It is clear that $\gamma$ satisfies (1). It also satisfies (2)
because $|Y|\equiv 0\md 4$ for each $Y\in\Y$, and because we have $h_{-Y}=-h_Y$ when $Y$ and $-Y$
are distinct.

We now construct a Steiner triple system of order $v=|G|+2$ which we will prove has no parallel class.
Let $\infty_1$ and $\infty_2$ be two new points (not in $G$) and let
$$\B^\infty=\{\{-g,2g,\infty_{\gamma(g)}\}:g\in\Z_5\times(H\setminus\{0_H\})\}.$$
Also, let $((\Z_5 \times \{0_H\}) \cup \{\infty_1,\infty_2\},\B^*)$ be any Steiner triple system
of order $7$ such that $\{0_G,\infty_1,\infty_2\}\notin\B^*$ and let $\B^0=\A^0 \setminus \{\{0_G,(1,0_H),(4,0_H)\},\{0_G,(2,0_H),(3,0_H)\}\}$. It is easily seen that
$$\S=(G\cup\{\infty_1,\infty_2\},\B^0\cup\B^\infty\cup\B^*)$$
is a Steiner triple system of order $v$. We now show that $\S$ has no parallel class.

For a contradiction, suppose $\C$ is a parallel class of $\S$. Define the weight of each triple of
$\S$ to be the sum in $G$ of its points, where the points $\infty_1$ and $\infty_2$ are treated as
$0_G$. Since the sum of the elements of $G$ is $0_G$, the sum of the weights of the triples in $\C$
is also $0_G$. Now note the following properties concerning weights of triples which we will use in
the arguments that follow. Every triple in $\B^0$ has weight $0_G$, every triple in $\B^*$ has weight in $\Z_5\times\{0_H\}$, and every triple in $\B^\infty$ has weight in
$\Z_5\times(H\setminus\{0_H\})$. The weight of the triple that contains both $\infty_1$ and
$\infty_2$ is not $0_G$.

If the triple that contains both $\infty_1$ and $\infty_2$ is in $\C$, then no other triple of
$\B^*$ is in $\C$ (because any two triples of a Steiner triple system of order $7$ intersect) and
no triple of $\B^\infty$ is in $\C$ (because every triple of $\B^\infty$ contains either $\infty_1$
or $\infty_2$). This means that the remaining triples of $\C$ are from $\B^0$. But this is a
contradiction because every triple in $\B^0$ has weight $0_G$, and the weight of the triple that
contains both $\infty_1$ and $\infty_2$ is not $0_G$. We conclude that $\C$ contains distinct
triples $T_1$ and $T_2$ such that $\infty_1\in T_1$ and $\infty_2\in T_2$, and that the remaining
triples of $\C$ are all from $\B^0\cup\B^*$.

Since every triple in $\B^0\cup\B^*$ has weight in $\Z_5\times\{0_H\}$, the sum of the weights of
$T_1$ and $T_2$ is also in $\Z_5\times\{0_H\}$. This means that either both $T_1$ and $T_2$ are in
$\B^*$ or both are in $\B^\infty$ (because every triple in $\B^*$ has weight in
$\Z_5\times\{0_H\}$, and every triple in $\B^\infty$ has weight in
$\Z_5\times(H\setminus\{0_H\})$). The triples $T_1$ and $T_2$ cannot both be in $\B^*$ because any
two triples of $\B^*$ intersect. On the other hand, if $T_1$ and $T_2$ are both in $\B^\infty$,
then the fact that the sum of the weights of $T_1$ and $T_2$ is in $\Z_5\times\{0_H\}$ contradicts
Property (2) of $\gamma$. We conclude that $\S$ has no parallel class.
\end{proof}

\vspace{0.3cm} \noindent{\bf Acknowledgements:}\quad The authors acknowledge the support of the
Australian Research Council via grants DE120100040, DP120100790, DP120103067, DP150100530 and DP150100506.

\end{document}